\newtheorem{intro_thm}{Theorem}
\newtheorem{intro_cor}[intro_thm]{Corollary}
\newtheorem{lem}{Lemma}[section]
\theoremstyle{definition}
\newtheorem{dfn}[lem]{Definition}
\theoremstyle{remark}
\newtheoremstyle{TheoremNum}
        {0.2 cm}{0.2 cm}              
        {\itshape}                      
        {}                              
        {}                     
        {.}                             
        { }                             
        {\thmname{\bfseries #1}\thmnote{ \bfseries #3}}
    \theoremstyle{TheoremNum}
\newcommand{\hh}{h} 
\newcommand{\R}{\mathbb{R}}
\DeclareMathOperator{\opp}{\mathrm{opp}}
\renewcommand{\leq}{\leqslant}
\renewcommand{\geq}{\geqslant}
\renewcommand{\setminus}{\smallsetminus}
\title[Projections onto maximal flats]{Projections from Furstenberg boundaries onto maximal flats and barycenter maps}
\author[Michelle Bucher]{
Michelle Bucher} 
\address{Universit\'e de Gen\`eve}
\email{Michelle.Bucher@unige.ch}
\author[Alessio Savini]{
Alessio Savini}
\address{University of Milano-Bicocca} 
\email{alessio.savini@unimib.it}
\thanks{The first author is supported by the Swiss National Science Foundation. The second author is supported by Indam GNSAGA.
\\
\indent Mathematics Subject Classification 2020: Primary 22E41, Secondary 57T10.
} 
\begin{document}

\begin{abstract}
Let $G$ be a semisimple connected Lie group of non-compact type with finite center. Let $K<G$ be a maximal compact subgroup and $P<G$ be a minimal parabolic subgroup. 
For any pair $(F,x)$, where $F$ is a maximal flat in $G/K$ and $x \in G/P$ is opposite to the Weyl chambers determined by $F$, we define a projection $\Phi(F,x) \in F$ which is continuous and $G$-equivariant. 

Furthermore, if $q\geq 3$, we exhibit a $G$-equivariant continuous map defined on an open subset of full measure of the space of $q$-tuples of $(G/P)^q$ with image in $G/K$. When $G$ is the orientation preserving isometries of real hyperbolic space and $q=3$, we recover the geometric barycenter of the corresponding ideal triangle.

All our proofs are constructive. 

 \end{abstract}

\maketitle

\section{Introduction}
A barycenter map on a space $X$ usually refers to a map from certain probability measures on $X$ onto $X$. A first example is Cartan's barycenter \cite{Cartan} for a finite set of points (equivalently for atomic probability measures of equal weights) in a Riemannian manifold of nonpositive curvature. Cartan's barycenter has been extended to measures with finite second moment on $\mathrm{CAT}(0)$-spaces by Austin \cite{Austin} and to measures with finite first moment on Busemann spaces by Es-Sahib and Heinich \cite{EsSahibHeinich}, Sturm \cite{Sturm} and Navas \cite{Navas} to obtain several results of ergodic nature, such as variants of Birkhoff's Theorem or of the law of large numbers. 

For nonpositively curved manifolds, it is natural to extend such barycenters to (certain) probability measures on (subsets of) the geodesic boundary of $X$. Douady and Earle \cite{Douady} initiated such constructions on the hyperbolic plane in order to extend self-maps from the circle to the Poincar\'e disk. This approach was later generalized and exploited by Besson, Courtois and Gallot \cite{BCG95,BCG96} and Francaviglia   \cite{Francaviglia} for symmetric spaces of rank one, and in the higher rank case by Connell and Farb \cite{Connell,Connell2}. The most general setting so far is for nonpositively curved manifolds with  negative Ricci curvature \cite{Connell3}. In all these articles, barycenters were one of the main tool in establishing rigidity results, most notably the Entropy Rigidity Conjecture in rank one \cite{BCG95,BCG96} or products of rank one  \cite{Connell}.

For higher rank symmetric spaces the barycentric construction from \cite{Connell2} is only defined for probability measures whose support equals the Furstenberg boundary. In contrast we produce in Corollary \ref{THE barycenter} a barycenter map which can be thought of as being defined on generic atomic probability measures of equal weights $\leq 1/3$ or equivalently on $q$-tuples of generic points, where $q\geq 3$.  In the general case, Corollary \ref{THE barycenter} will be a direct consequence of Corollary \ref{cor barycenter}  or equivalently of Theorem \ref{thm barycenter} where we exhibit a continuous and equivariant projection from   generic points in the Furstenberg boundary onto generic maximal flat. As far as we know, Theorem \ref{thm barycenter} is  new in higher rank.

We start by fixing some notation. Let $G$ be a semisimple connected Lie group of non compact type with finite center. Let $K<G$ be a maximal compact subgroup, $P<G$ be a minimal parabolic subgroup and $A<P$ a maximal split torus. Recall that the associated Weyl group is the quotient $W=T/M$, where $T=N_K(A)$ is the normalizer of $A$ in $K$ and $M=Z_K(A)$ is the centralizer of $A$ in $K$. We choose $w_0\in G$ a representative of the longest element of $W$. We denote by $\mathcal{F}_{G/K}$ the set of maximal flats in the symmetric space $G/K$ and by $F_A\in \mathcal{F}_{G/K}$ the canonical maximal flat
$$F_A:=\{ aK\mid a\in A\}\subset G/K.$$
Since there is only one $G$-orbit of maximal flats, we can write $\mathcal{F}_{G/K}=\{gF_A\mid g\in G\}$. 

For a given maximal flat $F\in  \mathcal{F}_{G/K}$, we define its \emph{boundary} $\partial F\subset G/P$ as the set of equivalence classes of Weyl chambers determined by $F$ in the Furstenberg boundary $G/P$. In the particular case of the flat $F_A$, its boundary is given by the points 
$$\partial F_A=\{ wP\mid w\in W\}.$$ 
Notice that the action of $W$ on $\partial F_A$ is well defined since two representatives of $w\in W$ differ by left multiplication by an element in $M<P$. For an arbitrary maximal flat $gF_A$ we have that $\partial(gF_A)=g(\partial F_A)$. 

A pair of points in the Furstenberg boundary $G/P$ is said to be \emph{opposite} if it is in the same $G$-orbit as the pair $(P,w_0P)$. Note that in the rank one case, opposite points are distinct pairs of points. We define an open and dense subset of $\mathcal{F}_{G/K} \times G/P$ as follows:
$$(\mathcal{F}_{G/K} \times G/P)_\mathrm{opp}:=\{(F,x)\in \mathcal{F}_{G/K} \times G/P\mid x \mathrm{\ is \ opposite \ to \ every \ point \ in \ }\partial F\}.$$

\begin{intro_thm}\label{thm barycenter}
There exists a $G$-equivariant continuous map
$$\Phi: (  \mathcal{F}_{G/K}\times G/P)_{\mathrm{opp}}\longrightarrow G/K.$$
Furthermore, $\Phi(F,x)\in F$ for every $(F,x)\in (  \mathcal{F}_{G/K}\times G/P)_{\mathrm{opp}}$.
\end{intro_thm}

In the rank one case, maximal flats are geodesics, and a pair $(\gamma,x)\in  (  \mathcal{F}_{G/K}\times G/P)_{\mathrm{opp}}$ only needs to satisfy the condition that $x$ is not one of the two endpoints of the geodesic $\gamma$. The map $\Phi$ of Theorem \ref{thm barycenter} could be taken to be the orthogonal projection of $x$ on $\gamma$, and this is indeed what we recover in our constructive proof of Theorem \ref{thm barycenter}. Note that the map $\Phi$ is not unique. We will get back to the uniqueness below. 

Since two opposite points lie on a unique maximal flat, it is natural to consider a variant of Theorem \ref{thm barycenter} defined on the following full  measure subset of triples of points in $G/P$. We define the set  $(G/P)^{(3)}$ to consist of triples $(x,y,z)\in (G/P)^3$ such that  $x$ and $y$ are opposite, and hence lie on the boundary of a unique maximal flat $F_{x,y}$, and we require further that $z$ is opposite to every point in $\partial F_{x,y}$. 

\begin{intro_cor} \label{cor barycenter} 
There exists a $G$-equivariant continuous map 
$$
\Phi: (G/P)^{(3)} \longrightarrow G/K.
$$
Furthermore, $\Phi(x,y,z)$ belongs to the unique maximal flat $F_{x,y}$ for every $(x,y,z)\in  (G/P)^{(3)} $ . 
\end{intro_cor}

\begin{proof} Take the composition of the map $(x,y,z)\mapsto (F_{x,y},z)$ with the map of Theorem \ref{thm barycenter}.\end{proof}

In fact Corollary \ref{cor barycenter} is equivalent to Theorem \ref{thm barycenter}. We will actually establish Theorem \ref{thm barycenter} as a consequence of Corollary \ref{cor barycenter} for which we will present a proof independent of Theorem \ref{thm barycenter}.

In the case when $w_0$ acts as $-1$ on the Lie algebra of $A$, which holds for the groups of type $B_n$, $C_n$, $E_7$, $E_8$, $G_2$, $D_{2n}$, and hence also for their products, we can improve Corollary \ref{cor barycenter} by replacing the domain by the bigger and more natural set $ (G/P)^{3}_{\mathrm{p-opp}} $ of triples of pairwise opposite points in $G/P$: 

\begin{intro_thm} \label{thm barycenter opp} If $w_0$ acts as $-1$ on the Lie algebra of $A$ then
there exists a $G$-equivariant continuous map 
$$
\Phi: (G/P)^{3}_{\mathrm{p-opp}} \longrightarrow G/K.
$$
Furthermore, $\Phi(x,y,z)$ belongs to the unique maximal flat $F_{x,y}$ for every $(x,y,z)\in  (G/P)^{3}_{\mathrm{p-opp}}  $ . 
\end{intro_thm}

The maps from Corollary  \ref{cor barycenter} and Theorem \ref{thm barycenter opp} lack symmetry since  the image of a triple $(x,y,z)\in (G/P)^{(3)}$ will always lie on the unique flat having $x,y$ in its boundary. This is easy to solve while also generalizing to generic $q$-tuples of points. We define the set of \emph{generic} $q$-tuples of points $(G/P)^{(q)}$ as the $q$-tuples $(x_1,\ldots,x_q)$ such that $(x_i,x_j,x_k)\in (G/P)^{(3)}$  for any distinct $1\leq i,j,k\leq q$. We also consider the set  $ (G/P)^{q}_{\mathrm{p-opp}} $ of $q$-tuples of pairwise opposite points in $G/P$.

\begin{intro_cor}\label{THE barycenter}
For every $q \geq 3$, there exists a $G$-equivariant continuous symmetric map 
$$
\mathrm{bar}_q:(G/P)^{(q)} \longrightarrow G/K.
$$
If futher $w_0$ acts as $-1$ on the Lie algebra of $A$ then
there exists  a $G$-equivariant continuous symmetric map 
$$
\mathrm{bar}_q:(G/P)^{q}_{\mathrm{p-opp}} \longrightarrow G/K.
$$

\end{intro_cor}

For $G$ the orientation preserving isometries of real hyperbolic $n$-space $x,y,z$ three distincts points in the boundary $\partial \mathbb{H}^n$ we recover as $\mathrm{bar}_3(x,y,z)\in \mathbb{H}^n$ the geometric barycenter of the ideal triangle with vertices $x,y,z$.

\begin{proof} For any finite number of points  $\xi_1,\ldots,\xi_\ell \in G/K$, Cartan's barycenter \cite{Cartan} is the unique minimizer of the convex function
$$
\xi \in G/K \longrightarrow \sum_{m=1}^\ell d(\xi,\xi_m)^2 
$$
and the assignment is $G$-equivariant and continuous. To obtain the corollary, we can apply Cartan's barycenter to the points
$$\Phi(x_i,x_j,x_k)$$
for every distincts $1\leq i,j,k\leq q$, where $\Phi$ is the map from Corollary  \ref{cor barycenter} and Theorem \ref{thm barycenter opp}, respectively.
\end{proof}

The next section presents two equivalent reformulations of Theorem   \ref{thm barycenter}, Corollary \ref{cor barycenter} and Theorem \ref{thm barycenter opp}. The first one, Theorem \ref{projection in sym sp}, only serves as a bridge towards Theorem \ref{projection N and A}, which is the version we will prove in Section \ref{section proof}. In the last section we detail the computations of the projection maps in the case of  $G=\mathrm{Isom }^+(\mathbb{H}^n)$ and $G=\mathrm{SL}(3,\mathbb{C})$. 

\section{Reformulations of   Theorem   \ref{thm barycenter}, Corollary \ref{cor barycenter} and Theorem \ref{thm barycenter opp} }

Observe that $G$ acts transitively both on maximal flats and on pairs of opposite points in $G/P$. The stabilizer of the canonical flat $F_A$ is the semidirect product $TA$, where $T=N_K(A)$ is the normalizer of $A$ in $K$ and the stabilizer of the pair $(P,w_0P)$  is the direct product $MA$, where $M=Z_K(A)$ is the centralizer of $A$ in $K$. For $x\in G/P$ we consider the set 
$$\mathrm{Opp}_x:=\{y\in G/P\mid y  \mathrm{\ is \ opposite \ to \ }x\}\subset G/P$$
and define
\begin{eqnarray*}
(G/P)_\mathrm{opp}&:=&\bigcap_{w\in W} \mathrm{Opp}_{wP},\\
(G/P)_{w_0-\mathrm{opp}}&:=&\mathrm{Opp}_P\bigcap \mathrm{Opp}_{w_0P}.
\end{eqnarray*}
It is clear that 
\begin{enumerate}
\item  $(F_A,x)\in  (  \mathcal{F}_{G/K}\times G/P)_{\mathrm{opp}}$ if and only if $x\in (G/P)_\mathrm{opp}$, 
\item  $(P,w_0P,x)\in (G/P)^{(3)}$ if and only if $x\in (G/P)_\mathrm{opp}$, 
\item $(P,w_0P,x)\in (G/P)^{3}_{\mathrm{p-opp}}$ if and only if $x\in (G/P)_{w_0-\mathrm{opp}}$. 
\end{enumerate}
Finally note that since $TA$ permutes the points in $\partial F_A$, the action of $TA$ on $G/P$ restricts to an action on $(G/P)_\mathrm{opp}$. The action of $TA$ does not in general restrict to an action on $(G/P)_{w_0-\mathrm{opp}}$, but the action of $MA$ does, as $MA$ stabilizes both $P$ and $w_0P$. As a consequence, Theorem   \ref{thm barycenter}, Corollary \ref{cor barycenter} and Theorem \ref{thm barycenter opp}  are equivalent to the first, second and third statement respectively of the following:

\begin{intro_thm}\label{projection in sym sp}
\begin{enumerate}
\item There exists a $TA$-equivariant continuous map $$\widetilde{\varphi}:(G/P)_{\mathrm{opp}} \longrightarrow F_A \subset G/K.$$
\item There exists an $MA$-equivariant continuous map $$\varphi:(G/P)_{\mathrm{opp}}\longrightarrow F_A\subset G/K.$$
\item If $w_0$ acts as $-1$ on the Lie algebra of $A$ then
there exists an $MA$-equivariant continuous map 
$$\varphi:(G/P)_{w_0\mathrm{-opp}}\longrightarrow F_A\subset G/K.$$
\end{enumerate}
\end{intro_thm}

Of course, since $M<T$ the second assertion of the theorem follows from the first one. We will however first establish the existence of an $MA$-equivariant continuous map and obtain a $TA$-equivariant one by averaging over the quotient $T/M$, which is precisely the Weyl group $W$.  

The lack of uniqueness is now evident: any $MA$-equivariant map $\varphi$ of Theorem \ref{projection in sym sp} can be changed by left multiplication with an element $a\in A$. Much worse may happen: we will see in the case of $\mathrm{SL}(3,\mathbb{C})$ two very different examples of such $MA$ and $TA$-equivariant maps. 

We finish this section with another equivalent reformulation of Theorem   \ref{thm barycenter}, Corollary \ref{cor barycenter} and Theorem \ref{thm barycenter opp}  which we will prove in the next section. To do so, first recall that the set $\mathrm{Opp}_P\subset G/P$ of points opposite to $P$ forms an open dense subset of $G/P$ parametrized by the diffeomorphism 
\begin{equation}\label{eq:N:parametrization}
\begin{array}{rcl}
\chi:N &\longrightarrow& \mathrm{Opp}_P \\ 
n& \longmapsto& nw_0P,
\end{array}\end{equation}
where $N$ is the unipotent radical of $P$ \cite[Corollary IX.1.9]{Helgason}. For $w\in W$, define
$$N_w:=\{ n\in N\mid nw_0P \mathrm{\ is \ opposite \ to \ } wP\}.$$ 
The preimage of $(G/P)_{\mathrm{opp}}$ under the diffeomorphism $\chi$ from (\ref{eq:N:parametrization}) is then by definition
$$N_\mathrm{opp}:= \bigcap_{w\in W}N_w,$$ 
whereas the preimage of $(G/P)_{w_0-\mathrm{opp}}$ is $N_{w_0}$.
The actions of $TA$ on $(G/P)_{\mathrm{opp}}$ and $MA$ on $(G/P)_{w_0-\mathrm{opp}}$ induce through $\chi$ actions on $N_\mathrm{opp}$ and $N_{w_0-\mathrm{opp}}$, respectively, which we denote by
$$\iota_h(n):= \chi^{-1}(hnw_0P),$$
for $h\in TA$ and $n\in N_{\mathrm{opp}}$, or $h\in MA$  and $n\in N_{w_0-\mathrm{opp}}$. Restricting this action to $MA$ we recover the restriction to $N_{\mathrm{opp}}$ or $N_{w_0-\mathrm{opp}}$ of the action of $MA$ on $N$ by conjugation: 

\begin{lem}\label{iota for MA} If $h\in MA$ then $\iota_h(n)=hnh^{-1}$ for every $n\in N_{w_0-\mathrm{opp}}\supset N_{\mathrm{opp}}$. 
\end{lem}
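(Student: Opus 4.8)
The plan is to unwind the definition of $\iota_h$ and use the fact that $w_0 P$ is fixed, up to the right $P$-action, by elements normalising things appropriately. First I would recall that for $h \in MA$ the element $h$ normalises $N$: indeed $A$ normalises $N$ (this is the very structure of the parabolic $P = MAN$), and $M = Z_K(A)$ centralises $A$ and normalises $N$ as well since $M \subset P$ and $M$ normalises the unipotent radical of $P$. Hence $hnh^{-1} \in N$ for every $n \in N$. So the candidate formula $\iota_h(n) = hnh^{-1}$ at least produces an element of $N$.

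Next I would compute $\chi^{-1}(hnw_0P)$ directly. We have $hnw_0 P = (hnh^{-1}) h w_0 P$. The key point is that $hw_0P = w_0P$, i.e. $h$ stabilises the point $w_0P \in G/P$: this is because $h \in MA$ and $MA$ is exactly the stabiliser of the pair $(P, w_0P)$ — a fact already recorded in the excerpt when discussing the stabiliser of opposite pairs. (Concretely, $A$ is stabilised by $w_0$-conjugation so $w_0^{-1}Aw_0 = A \subset P$ gives $Aw_0P = w_0P$, and $M$ commutes with $A$ and lies in $P \cap w_0Pw_0^{-1}$, so $Mw_0P = w_0P$.) Therefore
\[
hnw_0P = (hnh^{-1})w_0P = \chi(hnh^{-1}),
\]
where in the last equality I use that $hnh^{-1} \in N$ together with the definition \eqref{eq:N:parametrization} of $\chi$. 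Applying $\chi^{-1}$ gives $\iota_h(n) = hnh^{-1}$, as claimed. One should also note this computation is valid for $n$ in the relevant invariant subset: if $n \in N_{w_0\text{-opp}}$ (resp. $N_{\mathrm{opp}}$) then, since $MA$ preserves $(G/P)_{w_0\text{-opp}}$ (resp. $(G/P)_{\mathrm{opp}}$), the point $hnw_0P$ again lies in the appropriate subset, so $hnh^{-1}$ lies in $N_{w_0\text{-opp}}$ (resp. $N_{\mathrm{opp}}$) and the formula is internally consistent; but the algebraic identity $\iota_h(n) = hnh^{-1}$ actually holds for all $n \in N$.

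The only genuine content — and the step I would flag as the one to get right rather than the "hard part", since nothing here is hard — is the twin normalisation facts: that $MA$ normalises $N$ (so that the conjugate stays in $N$ and $\chi$ applies) and that $MA$ fixes $w_0P$ (so that the $w_0$ can be commuted past $h$ harmlessly). Both are standard structure theory of minimal parabolics: $P = MAN$ with $[MA, N] \subseteq N$, and $MA = Z_G(A) \cap K \cdot A$ lies in $P \cap {}^{w_0}P$. I would state these explicitly with a one-line justification each and then the proof is the two-line display above.
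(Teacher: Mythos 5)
Your proof is correct and is essentially the paper's own argument: rewrite $hnw_0P=(hnh^{-1})hw_0P$, use that $MA$ normalizes $N$ and stabilizes $w_0P$, and read off $\iota_h(n)=hnh^{-1}$ from the definition of $\chi$. The extra justifications you give (why $hw_0P=w_0P$, and the invariance of $N_{\mathrm{opp}}$ and $N_{w_0-\mathrm{opp}}$) are fine but not needed beyond the facts the paper already records about the stabilizer of the pair $(P,w_0P)$.
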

\begin{proof} We have
 $$hnw_0P=hnh^{-1} \underbrace{hw_0 P}_{=w_0P} =\underbrace{(hnh^{-1})}_{\in N}w_0P$$ 
 since $MA<P$ and $MA$ normalizes $N$. 
\end{proof}

We will also need the canonical identification of $A$ with the canonical maximal flat $F_A$ given by the $TA$-equivariant diffeomorphism
$$\begin{array}{rcl}
A&\longrightarrow &F_A\\
a&\longmapsto &aK,
\end{array}$$
where the $T$-action on $A$ is by conjugation and the $A$-action is by left multiplication, whereas the $TA$-action on $F_A$ is by left multiplication. 
We can thus reformulate Theorem \ref{projection in sym sp} equivalently as: 

\begin{intro_thm}\label{projection N and A}

\begin{enumerate}

\item There exists a $TA$-equivariant continuous map $$\widetilde{\Psi}:N_{\mathrm{opp}}\longrightarrow A,$$

\item There exists an $MA$-equivariant continuous map $$\Psi: N_{\mathrm{opp}} \longrightarrow A.$$

\item If $w_0$ acts as $-1$ on the Lie algebra of $A$ then
there exists an $MA$-equivariant continuous map  $$\Psi: N_{w_0-\mathrm{opp}} \longrightarrow A.$$
\end{enumerate}
\end{intro_thm}

%
%
\section{Proof of Theorem \ref{projection N and A}}\label{section proof}

Recall that in virtue of the Iwasawa decomposition \cite[Theorem IX.1.3]{Helgason} any element $g \in G$ can be written in a unique way as a product $ank$, where $a \in A, n \in N, k \in K$. This allows us to define the $A$-\emph{projection} (which is not a homomorphism) as 
$$\begin{array}{rccl}
\pi_A:&G& \longrightarrow &A, \\
&g=ank&\longmapsto&\pi_A(g)=a.
\end{array}
$$
Observe that 
\begin{equation}\label{M invariance projection}
\pi_A(h g)=\pi_A(g), 
\end{equation}
for any $h \in M$. Indeed
$$h g=hank=h a h^{-1} h n h^{-1} h k=a\underbrace{h n h^{-1}}_{\in N} \underbrace{h k }_{\in K},$$
where we exploited the fact that $h\in M<K$ centralizes $A$ and normalizes $N$. 

\begin{dfn}\label{def psi}
Given an element $w \in W$, we define the $w$-\emph{projection} on $A$ as
$$\begin{array}{rccl}
\psi_w:&N_{\opp}& \longrightarrow &A\\
 &n&\longmapsto & \psi_w(n):=\pi_A(w_0^{-1}n^{-1}w^{-1} \iota_w(n)w_0),
\end{array}$$
where we are abusively considering representatives $w, w_0 \in T$. 
\end{dfn}

\begin{lem}\label{lem MA psiw}
For any element $w \in W$, the function $\psi_w$ is continuous and does not depend on the choice of the representatives of $w$ nor $w_0$. Additionally we have that
\begin{equation}\label{eq psiw M invariance}
\psi_w(h n h^{-1})=\psi_w(n),
\end{equation}
\begin{equation}\label{eq psiw A invariance}
\psi_w(a n a^{-1})=(w_0^{-1} (a (wa^{-1} w^{-1}) )w_0)\psi_w(n), 
\end{equation}
for any $a \in A, h \in M, n\in N_{\opp}$. 
\end{lem}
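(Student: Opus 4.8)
The plan is to derive every assertion from a handful of elementary properties of the Iwasawa projection $\pi_A$ together with one structural identity for the maps $\iota_w$; throughout I fix representatives $w,w_0\in T$ as in Definition \ref{def psi}. Uniqueness of the decomposition $G=ANK$ shows at once that $\pi_A$ is continuous, that $\pi_A(ag)=a\,\pi_A(g)$ for $a\in A$, and that $\pi_A(gk)=\pi_A(g)$ for $k\in K$; the left $M$-invariance $\pi_A(hg)=\pi_A(g)$ for $h\in M$ is \eqref{M invariance projection}. Moreover every $p=man\in P=MAN$ can be rewritten $p=a\,(mnm^{-1})\,m$, so $\pi_A(p)=a$: the restriction $\pi_A|_P\colon P\to A$ is the Langlands projection, in particular a group homomorphism which is the identity on $A$ and trivial on $M$. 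Continuity of $\psi_w$ follows, since $\psi_w=\pi_A\circ\theta$ with $\theta(n)=w_0^{-1}n^{-1}w^{-1}\iota_w(n)w_0$ continuous, and $\iota_w=\chi^{-1}\circ(n\mapsto wnw_0P)$ is continuous because $\chi$ of \eqref{eq:N:parametrization} is a diffeomorphism (one uses here that $wnw_0P\in\mathrm{Opp}_P$ for $n\in N_{\opp}$, which holds because $nw_0P$ is by definition opposite to $w^{-1}P$).

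The structural identity is: for every $g\in MA$ and every $n\in N_{\opp}$,
\[
\iota_w(gng^{-1})=(wgw^{-1})\,\iota_w(n)\,(wgw^{-1})^{-1}.
\]
To prove it, recall that $\iota_w(gng^{-1})$ is the unique element of $N$ with $\iota_w(gng^{-1})\,w_0P=w\,gng^{-1}\,w_0P$ — unique because $\chi$ is injective, and defined since $gng^{-1}\in N_{\opp}$ ($MA$ acts on $N_{\opp}$, by conjugation thanks to Lemma \ref{iota for MA}). As $g$ and $w_0^{-1}gw_0$ both lie in $MA\subset P$, we have $g^{-1}w_0P=w_0P$, so
\[
w\,gng^{-1}\,w_0P=w\,gn\,w_0P=(wgw^{-1})\,wn\,w_0P=(wgw^{-1})\,\iota_w(n)\,w_0P;
\]
pulling the factor $wgw^{-1}\in MA$ through (using $(wgw^{-1})w_0P=w_0P$) rewrites this as $\bigl((wgw^{-1})\iota_w(n)(wgw^{-1})^{-1}\bigr)w_0P$, and injectivity of $\chi$ yields the identity.

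Now set $p_n:=w_0^{-1}n^{-1}w^{-1}\iota_w(n)w_0$, so $\psi_w(n)=\pi_A(p_n)$; note $p_n\in P$, since $\iota_w(n)w_0P=wnw_0P$ says exactly that $(wnw_0)^{-1}\iota_w(n)w_0\in P$. Using the structural identity, then $g^{-1}w^{-1}(wgw^{-1})=w^{-1}$, then the substitution $n^{-1}w^{-1}\iota_w(n)=w_0p_nw_0^{-1}$, one obtains for $g\in MA$
\[
p_{gng^{-1}}=(w_0^{-1}gw_0)\,p_n\,\bigl(w_0^{-1}(wgw^{-1})^{-1}w_0\bigr),
\]
both outer factors lying in $MA$. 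Applying the homomorphism $\pi_A|_P$: for $g=h\in M$ the outer factors lie in $M$, where $\pi_A|_P$ is trivial, so $\psi_w(hnh^{-1})=\psi_w(n)$, which is \eqref{eq psiw M invariance}; for $g=a\in A$ they lie in $A$, where $\pi_A|_P$ is the identity, and since $A$ is abelian this gives $\psi_w(ana^{-1})=\bigl(w_0^{-1}\,a\,(wa^{-1}w^{-1})\,w_0\bigr)\psi_w(n)$, which is \eqref{eq psiw A invariance}.

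It remains to prove independence of the representatives. Using $w_0m$ with $m\in M$ in place of $w_0$ leaves $\iota_w$ unchanged (it depends only on $w_0P$) and replaces the argument of $\pi_A$ by $m^{-1}(\cdot)m$, which has the same image under $\pi_A$ by the left $M$-invariance and right $K$-invariance; so one recovers $\psi_w$. Using $wm$ with $m\in M$ in place of $w$: since $wmnw_0P=w(mnm^{-1})w_0P$ one gets $\iota_{wm}(n)=\iota_w(mnm^{-1})$, and a short computation of the same type identifies the resulting map with $n\mapsto\psi_w(mnm^{-1})$, equal to $\psi_w(n)$ by \eqref{eq psiw M invariance}. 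I expect the main difficulty to be purely organisational: at each step one must check that every conjugate $w_0^{-1}(\cdot)w_0$ and $w(\cdot)w^{-1}$ remains inside the subgroup ($M$, $A$ or $MA$) to which the relevant invariance of $\pi_A$ applies — which is precisely where one uses that $M$ and $A$ lie in $P$, that $M$ is normal in $T$, and that $w_0\in T$ normalizes $A$. Once the structural identity for $\iota_w$ is in hand, the rest is bookkeeping.
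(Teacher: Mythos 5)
Your proof is correct and rests on the same two pillars as the paper's argument: the element $w_0^{-1}n^{-1}w^{-1}\iota_w(n)w_0$ lies in $P$, where $\pi_A$ restricts to a homomorphism (identity on $A$, trivial on $M$, killing $N$), together with the conjugation identity $\iota_w(gng^{-1})=(wgw^{-1})\,\iota_w(n)\,(wgw^{-1})^{-1}$, so this is essentially the paper's proof. The only differences are organisational and harmless: you establish that identity for all $g\in MA$ at once (directly from the definition of $\iota_w$ via $\chi$) and deduce both \eqref{eq psiw M invariance} and \eqref{eq psiw A invariance} from a single factorization of $p_{gng^{-1}}$, handling independence of representatives last, whereas the paper proves the identity only for $g\in A$ (its \eqref{iota ana}), treats independence of representatives first, and obtains $M$-invariance via $\psi_w(hnh^{-1})=\psi_{wh}(n)=\psi_w(n)$.
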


\begin{proof} The continuity is clear from the continuity of $\pi_A$ and $\iota_w$. The fact that $\psi_w$ does not depend on the choice of the representative of $w_0$ follows from (\ref{M invariance projection}) and the definition of $\pi_A$. For the proof of the independence of $w$, suppose that  $hw$, where $ h \in M$, is another representative for $w$. (Although $W=T/M$ is the quotient of $T$ by $M$ on the right, the left coset $wM$ is equal to the right coset $Mw$ since $M$ is normal in $T$.) Replacing $w$ by $\hh w$ (in two places) in the definition of the $w$-projection we obtain
\begin{align*}
\psi_{\hh w}(n)&=\pi_A(w_0^{-1}n^{-1}(\hh w)^{-1} \underbrace{\iota_{\hh w}(n)}_{=\iota_\hh(\iota_w(n))} w_0)\\
&=\pi_A(w_0^{-1}n^{-1}w^{-1}  \hh^{-1} h \iota_{w}(n)h^{-1} w_0)\\
&=\pi_A(w_0^{-1}n^{-1}w^{-1}\iota_w(n) \underbrace{h^{-1} w_0}_{\in K}),
\end{align*}
where we used the fact that $\iota$ is an action and that it restricts to conjugation on $MA$ (Lemma \ref{iota for MA}). The last evaluated expression differs from $w_0^{-1}n^{-1}w^{-1}\iota_{w}(n) w_0$ by an element in $K$ on the right, which has no effect on the $\pi_A$ projection and shows that $\psi_{w\hh}=\psi_w$. 

Let now $\hh\in M$. We have
 \begin{align*}
\psi_w(\hh n\hh^{-1})&=\pi_A(w_0^{-1}(\hh n \hh^{-1})^{-1} w^{-1} \iota_w(\hh n\hh^{-1})w_0) && \mathrm{by \ definition,} \\
&=\pi_A(w_0^{-1}
\hh n^{-1}  \hh^{-1}w^{-1} \iota_{w\hh}(n)w_0) &&  \mathrm{by\ Lemma \ }\ref{iota for MA},\\
&=\pi_A(\underbrace{(w_0^{-1} \hh w_0)}_{\in M} w_0^{-1} n^{-1} (wh)^{-1}\iota_{wh}(n)w_0) &&\\
&=\pi_A( w_0^{-1} n^{-1} (wh)^{-1}\iota_{wh}(n)w_0), &&\mathrm{by \ }(\ref{M invariance projection})\\
&=\psi_{wh}(n)=\psi_w(n),&&\\
\end{align*}
where for the last equality we used the independence of $\psi_w$ on the choice of representative $w\in T$. 

For the conjugation by an element $a\in A$ we preliminary compute
\begin{align}
\label{iota ana} \iota_w(ana^{-1})&=\iota_w(\iota_a(n))&&  \mathrm{by\ Lemma \ }\ref{iota for MA},\\
\nonumber &=\iota_{wa}(n)&& \mathrm{since \ }\iota \mathrm{ \ is \ an \ action},\\
\nonumber&=\iota_{waw^{-1}w}(n)&&\\
\nonumber&=\iota_{waw^{-1}}\iota_w(n)&& \mathrm{since \ }\iota \mathrm{\ is \ an \ action},\\
\nonumber &=waw^{-1} \iota_w(n) (waw^{-1})^{-1} &&  \mathrm{by\ Lemma \ }\ref{iota for MA} \mathrm{ \ for \ }waw^{-1}\in A.
\end{align}
Using this relation we obtain 
\begin{align*}
\psi_w(a n a^{-1})&=\pi_A(w_0^{-1}(a n a^{-1})^{-1} w^{-1} \iota_w(a n a^{-1})w_0) && \mathrm{by \ definition,} \\
&=\pi_A(w_0^{-1}(a n a^{-1})^{-1} w^{-1} (w a w^{-1})\iota_w(n)(w a w^{-1})^{-1} w_0) && \\
&=\pi_A(\underbrace{(w_0^{-1} a w_0)}_{\in A} \underbrace{w_0^{-1} n^{-1} w^{-1}\iota_w(n)w_0}_{\in P}\underbrace{(w_0^{-1} (wa w^{-1})^{-1} w_0)}_{\in A}).&
\end{align*}
Now we crucially need the fact that, although the projection $\pi_A$ is not a homomorphism on $G$, it is one when restricted to $P$ (corresponding to taking the quotient by the normal subgroup $NM\lhd P$). Observe that the middle expression indeed belongs to $P$ since by definition, 
$$wnw_0P=\iota_w(n)w_0P.$$
The lemma now follows given that the evaluation of $\pi_A$ on this middle expression is precisely $\psi_w(n)$ and $\pi_A$ restricted to $A$ is the identity. 
\end{proof}

\begin{proof}[Proof of Theorem \ref{projection N and A} (3)]  Observe that the definition \ref{def psi} of $\psi_{w_0}$ extends to $N_{w_0-\mathrm{opp}}$ and that the relations established in Lemma \ref{lem MA psiw} still hold for $n\in N_{w_0-\mathrm{opp}}$ when $w=w_0$. 

Suppose now that $w_0$ acts as $-1$ on the Lie algebra of $A$, or equivalently $w_0aw_0^{-1}=a^{-1} $ for any $a\in A$. In this case Equation (\ref{eq psiw A invariance}) for $w=w_0$ becomes 
\begin{equation}\label{proof 5 3}
\psi_{w_0}(a n a^{-1})=a^{-2}\psi_{w_0}(n).\end{equation}
We define 
$$\Psi(n):=\psi_{w_0}(n)^{-\frac{1}{2}},$$
where $a^\lambda=\mathrm{exp}(\lambda \log a)$ for any $a\in A, \lambda\in \R$. This function is continuous since $\psi_{w_0}$ is. It is $M$-invariant by (\ref{eq psiw M invariance}) and $A$-equivariant by (\ref{proof 5 3}).\end{proof} 

In the general case we will exploit the relation $\prod_{w\in W} waw^{-1}=e$ for every $a\in A$ and define $\Psi$ as an average of all the $\psi_w$'s.

\begin{proof}[Proof of Theorem \ref{projection N and A} (2)] Define
$$\begin{array}{rccl}
\Psi:&N_{\opp} &\longrightarrow &A\\
&n&\longmapsto & \Psi(n):=\left( \prod_{w \in W} w_0 \psi_w(n)w_0^{-1}\right)^{\frac{1}{|W|}} .
\end{array}$$
 Note that $\Psi$ is independent of all the choices of the representatives $w\in W$ since this is the case for the $\psi_w$'s by Lemma \ref{lem MA psiw} and the conjugation $w_0 a w_0^{-1}$ is independent of the choice of representative $w_0$ for $a\in A$ (since $M$ commutes with $A$). 

The continuity of  $\Psi$ is immediate from the continuity of the $\psi_w$'s. Since $M$ acts trivially on $A$, the $M$-equivariance amounts to the $M$-invariance which is an immediate consequence of Equation \eqref{eq psiw M invariance}. We are left to show the $A$-equivariance of $\Psi$. We have that
\begin{align*}
\Psi(ana^{-1})&=\left( \prod_{w \in W} w_0 \psi_w(ana^{-1})w_0^{-1}\right)^{\frac{1}{|W|}} \\
&= \left( \prod_{w \in W} w_0 \left( (w_0^{-1} (a (w a^{-1} w^{-1}) )w_0)\psi_w(n) \right))w_0^{-1}\right)^{\frac{1}{|W|}} \\
&= \left( \prod_{w \in W} a (w^{-1}a^{-1} w)(w_0\psi_w(n) w_0^{-1})\right)^{-\frac{1}{|W|}} \\
&=\left(     a^{|W|} \prod_{w \in W} (w^{-1}a^{-1} w) \prod_{w \in W}(w_0\psi_w(n) w_0^{-1})     \right)^{\frac{1}{|W|}}\\
&=a \Psi(n),
\end{align*}
where we simply used the definition of $\Psi$, the relations for $\psi_w(ana^{-1})$ from Equation (\ref{eq psiw A invariance}) and the fact that $\prod_{w \in W} (waw^{-1})=e$. \end{proof}

If $G$ is a product of rank $1$ Lie groups then the projections proposed in the proof of Theorem \ref{projection N and A} (2) and (3) coincide. In that case also $N_\mathrm{opp}=N_{w_0-\mathrm{opp}}$. In general however the inclusion $N_\mathrm{opp}\subset N_{w_0-\mathrm{opp}}$ is strict and it seems unlikely that the two projections could always agree. 

\begin{proof}[Proof of Theorem \ref{projection N and A} (1)] 
Now that the second item of Theorem \ref{projection N and A} is established we can exploit the existence of an $MA$-invariant map $\Psi:N_{\mathrm{opp}}\rightarrow A$ to produce a $TA$-equivariant one. We define
\begin{equation}\label{projection A W equivariant}
\begin{array}{rccl}
\widetilde{\Psi}:&N_{\mathrm{opp}}& \longrightarrow &A\\
 &n&\longmapsto&\widetilde{\Psi}(n)=\left(\prod_{w \in W} w^{-1}\Psi(\iota_w(n))w\right)^{\frac{1}{|W|}},
\end{array}
\end{equation}
where by $w\in W$ we mean a choice of representative in $T$. Let us right away verify that the product is independent of the choice of representative $w$ in $T$. Let $hw$, for $h\in M$, be another representative. We have
$$\Psi(\iota_{hw}(n))=\Psi(\iota_h(\iota_w(n)))=\Psi(h\iota_w(n) h^{-1})=\Psi(\iota_w(n)),$$
where we used the fact that $\iota$ is an action which restricts to the action by conjugation for $h\in M$ and the $M$-invariance of $\Psi$. Furthermore, since $A$ and $M$ commute, conjugation of $\Psi(\iota_w(n))\in A$ by $w^{-1}$ or $(hw)^{-1}$ gives the same result.

First we show that $\widetilde{\Psi}$ is still $A$-equivariant: For $a \in A$ we have 
\begin{align*}
\widetilde{\Psi}(ana^{-1})&=\left(\prod_{w \in W} w^{-1} \Psi(\iota_w(ana^{-1})) w\right)^{\frac{1}{|W|}}\\
&=\left(\prod_{w \in W} w^{-1} \Psi((waw^{-1})\iota_w(n)(waw^{-1})^{-1}) w\right)^{\frac{1}{|W|}}\\
&=\left(\prod_{w \in W} w^{-1}(waw^{-1})\Psi(\iota_w(n)) w\right)^{\frac{1}{|W|}}=\\
&=\left(a^{|W|} \prod_{w \in W} w^{-1}\Psi(\iota_w(n))w\right)^{\frac{1}{|W|}}=a \widetilde{\Psi}(n),
\end{align*}
where we used the Expression (\ref{iota ana}) for $\iota_w(ana^{-1})$ and the $A$-equivariance of $\Psi$. 

Now we prove that  $\widetilde{\Psi}$ has become $T$-equivariant. For $u \in T$ we have
\begin{align*}
\widetilde{\Psi}(\iota_{u}(n))&=\left(\prod_{w \in W} w^{-1} \Psi(\iota_w(\iota_{u}(n))) w\right)^{\frac{1}{|W|}}\\
&=\left(\prod_{w \in W} w^{-1} \Psi(\iota_{wu}(n)) w\right)^{\frac{1}{|W|}}\\
&=\left(\prod_{v \in W} (vu^{-1})^{-1} \Psi(\iota_v(n)) vu^{-1}\right)^{\frac{1}{|W|}}\\
&=\left(u\widetilde{\Psi}(n)^{|W|}u^{-1}\right)^{\frac{1}{|W|}}=u\widetilde{\Psi}(n)u^{-1},
\end{align*}
where we made the change of variable $v=wu$.\end{proof}

\section{Examples of projections} 

\subsection*{The case of $G=\mathrm{Isom}^+(\mathbb{H}^n)$}
We realize the boundary at infinity of $\mathbb{H}^n$ as $\mathbb{R}^{n-1} \cup \{ \infty \}$ and consider the minimal parabolic subgroup $P=\mathrm{Stab}_G(\infty)$ which identifies with the group of orientation preserving similarities of $\mathbb{R}^{n-1}$. We take as  maximal split torus $A$ the subgroup of homotheties $\{ a_\lambda: x \mapsto \lambda x \ | \ \lambda > 0\}$. The unipotent radical $N$ of $P$ then corresponds to the subgroup of translations, namely 
$$
\begin{array}{rcl}
\mathbb{R}^{n-1} &\longrightarrow &N \\
 v &\longmapsto & n_v(x):=x+v.
 \end{array}
$$
Observe that this identification is precisely the inverse of the diffeomorphism $\chi$ considered in (\ref{eq:N:parametrization}) between $N$ and opposite points to $\infty$, which are in rank $1$ distinct points from $\infty$. 

The boundary of the canonical flat consists of the two points $\infty$ and $0$. The set $(G/P)_{\mathrm{opp}}$ corresponds to the points in the boundary  distincts from $\infty$ and $0$. As a consequence, we can identify $N_{\mathrm{opp}}$ with $\mathbb{R}^{n-1} \setminus \{0\}$. The action on $N_\mathrm{opp}$ of any representative $w_0$ of the longest element in the Weyl group is thus simply given by 
$$
\begin{array}{rccl}
\iota_{w_0}:&N_\mathrm{opp}&\longrightarrow &N_\mathrm{opp}\\
&n_v&\longmapsto & n_{w_0(v)}.
\end{array}
$$

We fix as representative $w_0$  the inversion with respect to the unit sphere in the upper half space model pre-composed with a reflection fixing $0$ and $\infty$ in order for the composition to preserve orientation. On the boundary (and also on the upper half space) we obtain the expression 
$$\begin{array}{rccl}
w_0:&\mathbb{R}^{n-1}\cup \{\infty\} & \longrightarrow &\mathbb{R}^{n-1}\cup \{\infty\}\\ 
&x&\longmapsto &w_0(x):=\frac{x-2\langle e_1,x\rangle e_1}{\lVert x \rVert^2},
\end{array}$$
where $e_1=(1,0,\ldots,0)$ is the first vector of the canonical basis of $\mathbb{R}^n$, $\langle \cdot \ , \ \cdot\rangle$ denotes the standard scalar product and $\lVert \ \cdot \ \rVert$ the associated norm.

In the proof of Theorem \ref{projection N and A} (3)we define the projection $\Psi$ as
$$
\Psi(n_v)=\psi_{w_0}(n_v)^{-\frac{1}{2}}, 
$$
where by  Definition \ref{def psi} we have that
$$
\psi_{w_0}(n_v)=\pi_A(w_0n_{-v}w_0\iota_{w_0}(n_v)w_0),
$$
where we used that $w_0=w_0^{-1}$ and $(n_v)^{-1}=n_{-v}$. By \cite[Lemma 12]{BucSavExp} the projection $\pi_A$ is $a_\lambda$, where $\lambda$ is the last coordinate of the image of $e_n=(0,\ldots,0,1)\in \mathbb{H}^n$. Since translations and our element $w_0$ admit the same expression on the upper half space and on the boundary, it just remains to compute
\begin{align*}
(w_0n_{-v}w_0\iota_{w_0}(n_v)w_0)(e_n)&=(w_0n_{-v}w_0n_{w_0(v)})(e_n)\\
&=(w_0 n_{-v} w_0)(e_n+w_0(v))\\
&=(w_0 n_{-v})\left(\frac{1}{1+\lVert v \rVert^2}(v+\lVert v \rVert^2e_n)\right)\\
&=w_0\left( \frac{\lVert v \rVert^2}{1+\lVert v \rVert^2}(e_n-v)\right)=\frac{(w_0(-v)+e_n)}{\lVert v \rVert^2}.
\end{align*}
It follows that 
$$\psi_{w_0}(n_v)=a_{(1/\|v\|^2)}$$
and
$$
\Psi(n_v)=a_{\lVert v \rVert}.
$$
Further using the canonical identification of $A$ with the canonical flat, $a_{\lVert v \rVert}$ corresponds to ${\lVert v \rVert}\cdot e_n$ which is precisely the orthogonal projection of $v$ on the geodesic determined by $0$ and $\infty$, as claimed in the introduction. In this case we have that the map $\Psi$ is also equivariant with respect to the action of the Weyl group $W=\{e,w_0\}$, thus we have that $\Psi=\widetilde{\Psi}$. 

\subsection*{The case of $G=\mathrm{SL}(3,\mathbb{C})$} In this case we take $P$ to be the subgroup of upper triangular matrices.  The maximal split torus $A$ is then the subgroup of diagonal matrices with positive real diagonal entries,
$$
A=\{ \mathrm{diag}(a_{11},a_{22},a_{33}) \ | \ a_{11}a_{22}a_{33}=1, \ a_{ii} >0 \textup{\ \ for $i=1,2,3$} \}. 
$$
The unipotent radical $N$ of $P$ is given by the subgroup of unipotent matrices and $M$ consists of the determinant $1$ diagonal matrices with diagonal entries in $\{\pm 1\}$. The Weyl group is isomorphic to the symmetric group $\mathrm{Sym}(3)$. Representatives $s,t\in \mathrm{SL}(3,\mathbb{C})$ of the generators of $W$ can be taken as
$$
s=
\left(
\begin{array}{rrr}
0 & -1 & 0 \\
1 & 0 & 0  \\
0 & 0 & 1 
\end{array}
\right) \quad \mathrm{and} \quad t=
\left(
\begin{array}{rrr}
1 & 0 & 0  \\
0 & 0 & -1\\
0 & 1 & 0  
\end{array}
\right).$$
Representatives of the remaining nontrivial elements in $W$ are
$$
st=
\left(
\begin{array}{rrr}
0 & 0 & 1\\
1 & 0 & 0  \\
 0 & 1 & 0 
\end{array}
\right), \quad  ts=
\left(
\begin{array}{rrr}
 0 & -1& 0 \\
 0 & 0 & -1\\
1 & 0 & 0  \\
\end{array}
\right) \quad \mathrm{and} \quad sts=tst=w_0=
\left(
\begin{array}{rrr}
0 & 0 & 1 \\
0 & -1 & 0  \\
1 & 0 & 0 
\end{array}
\right).$$

To determine $\iota_w$ for any of these representatives $w$ of elements in $ W$ we need, for any unipotent matrix $n\in N$,  to find a unipotent matrix $\iota_w(n)$ such that $wnw_0P=\iota_w(n)w_0P$ or equivalently $w_0^{-1}n^{-1}w^{-1}\iota_w(n)w_0\in P$, which can be restated as $n^{-1}w^{-1}\iota_w(n)\in w_0Pw_0^{-1}$. The latter group is the group of lower triangular matrices. We will detail the computations in the case of $w=w_0$. Let 
$$n=\left(
\begin{array}{ccc}
1 & x & z \\
0 & 1 & y \\
0 & 0 & 1 \\
\end{array}
\right)$$
and suppose that $i_{w_0}(n)$ is given as
$$i_{w_0}(n)=\left(
\begin{array}{ccc}
1 & u & w \\
0 & 1 & v \\
0 & 0 & 1 \\
\end{array}
\right),$$
for some $u,v,w\in \mathbb{C}$, whose dependency in $x,y,z$ we need to establish. We need the product $n^{-1}w^{-1}\iota_w(n)$ to be lower diagonal, so we compute the upper triangular entries of the product
$$
\left(
\begin{array}{ccc}
1 & -x & -z+xy \\
0 & 1 & -y \\
0 & 0 & 1 \\
\end{array}
\right)\left(
\begin{array}{ccc}
0 & 0 & 1 \\
0 & -1 & 0  \\
1 & 0 & 0 
\end{array}
\right)\left(
\begin{array}{ccc}
1 & u & w \\
0 & 1 & v \\
0 & 0 & 1 \\
\end{array}
\right)$$
\begin{equation}\label{matrix}=\left(
\begin{array}{ccc}
* & x+u(-z+xy) & 1+xv+w(-z+xy) \\
* & * &-v-yw  \\
* & * & * \\
\end{array}
\right).
\end{equation}
These three computed entries have to be zero, which is equivalent to 
\begin{equation}\label{values}u=-\frac{x}{xy-z}, \quad v=-\frac{y}{z}, \quad w=\frac{1}{z}\end{equation}
and thus 
$$
i_{w_0}
\left(
\begin{array}{ccc}
1 & x & z \\
0 & 1 & y \\
0 & 0 & 1 \\
\end{array}
\right)=
\left(
\begin{array}{ccc}
1 & -\frac{x}{xy-z} & \frac{1}{z}\\
0 & 1 & -\frac{y}{z} \\
0 & 0 & 1\\ 
\end{array}
\right).
$$
Observe that we deduce that $n \in N_{w_0}$  if and only if $z\neq 0$ and $z-xy \neq 0$. It remains to compute $\Psi_{w_0}(n)$, which is by definition the $A$-projection of $w_0^{-1}n^{-1}w^{-1}\iota_w(n)w_0$. The latter is the conjugation by $w_0^{-1}$ of the matrix  (\ref{matrix}) where we plug in $u,v,w$ the values of (\ref{values}). Up to sign, this conjugation exchanges the first and last diagonal entries, and the $A$-projection of an upper diagonal matrix is precisely given by the absolute value of its diagonal entries (see \cite[Lemma 25]{BucSavExp}). As a consequence, $\Psi_{w_0}(n)$ is simply given by the absolute value of the diagonal entries of the matrix in  (\ref{matrix}) with the values of  (\ref{values}) taken in reverse order. Since we are actually interested in $w_0\Psi_{w_0}(n)w_0^{-1}$ we forget about taking the entries in reverse order. We thus obtain
$$
w_0\psi_{w_0}
\left(
\begin{array}{ccc}
1 & x & z\\
0 & 1 & y\\
0 & 0 & 1\\
\end{array}
\right)w_0^{-1}=
\left(
\begin{array}{ccc}
|-z+xy|& 0 & 0\\
0 & \frac{|z|}{|-z+xy|} &0\\
0 & 0 & \frac{1}{|z|}
\end{array}
\right).
$$

The computation of the remaining $\iota_w$ and $\psi_w$ are completely analogous. We obtain
\begin{align*}\iota_s(n)&=\left(
\begin{array}{ccc}
1 & -\frac{1}{x} & -y\\
0 & 1 & z \\
0 & 0 & 1\\ 
\end{array}
\right), \quad&w_0\psi_s(n)w_0^{-1} =\left(
\begin{array}{ccc}
|x|&0 &0\\
0 &\frac{1}{|x|} &0\\
0 & 0 & 1\\ 
\end{array}
\right),\\
 \iota_t(n)&=\left(
\begin{array}{ccc}
1 &{-z+xy} & \frac{z}{y}\\
0 & 1 & -\frac{1}{y} \\
0 & 0 & 1\\ 
\end{array}
\right), \quad &w_0\psi_t(n)w_0^{-1}=\left(
\begin{array}{ccc}
1 &0 &0\\
0 & |y| &0 \\
0 & 0 & \frac{1}{|y|}\\ 
\end{array}
\right),\\
\iota_{ts}(n)&=\left(
\begin{array}{ccc}
1 & \frac{-z+xy}{x} & -\frac{y}{z}\\
0 & 1 & -\frac{1}{z} \\
0 & 0 & 1\\ 
\end{array}
\right), \quad&w_0\psi_{st}(n)w_0^{-1} =\left(
\begin{array}{ccc}
|x| & 0&0\\
0 & \frac{|z|}{|x|} & 0 \\
0 & 0 & \frac{1}{|z|}\\ 
\end{array}
\right),\\
 \iota_{st}(n)&=\left(
\begin{array}{ccc}
1 & \frac{1}{z-xy} & \frac{1}{y}\\
0 & 1 & \frac{z}{y} \\
0 & 0 & 1\\ 
\end{array}
\right), \quad &w_0\psi_{st}(n)w_0^{-1}=\left(
\begin{array}{ccc}
|-z+xy|&0 &0\\
0 & \frac{|y|}{|-z+xy|} & 0 \\
0 & 0 & \frac{1}{|y|}\\ 
\end{array}
\right).
\end{align*}
It remains to take the $6$-th root of the product of all the $w_0\psi_ww_0^{-1}$'s to obtain
\begin{equation}\label{projection sl3}
\Psi(n)=\left(
\begin{array}{ccc}
|x|^2|-z+xy|^2&0 &0\\
0 & \frac{|y|^2|z|^2}{|x|^2|-z+xy|^2} & 0 \\
0 & 0 & \frac{1}{|y|^2|z|^2}\\ 
\end{array}
\right)^{1/6}.
\end{equation}
Now recall that $AM$ consists of diagonal matrices of determinant $1$ with real entries, and the action by conjugation of a diagonal matrix with diagonal entries $a_1,a_2,a_3$ on $N$ sends $n$ to
\begin{equation}\label{action}\left(
\begin{array}{ccc}
1 & \frac{a_1}{a_2}x &  \frac{a_1}{a_3}z\\
0 & 1 &  \frac{a_2}{a_3}y\\
0 & 0 & 1\\
\end{array}
\right).\end{equation}
It is straightforward to check that $\Psi$ is indeed $AM$-equivariant. 

By staring at the action given by (\ref{action}), another $AM$-equivariant map jumps to the eyes: simply take
\begin{equation}\label{different projection sl3}
\Psi'(n)=\left(
\begin{array}{ccc}
|x|^2|z|^2&0 &0\\
0 & \frac{|y|^2}{|x|^2} & 0 \\
0 & 0 & \frac{1}{|y|^2|z|^2}\\ 
\end{array}
\right)^{1/6}.
\end{equation}
The simpler form of the latter projection can seem more appealing, but for applications to constructing continuous cocycles on the Furstenberg boundary we will, in our upcoming joint paper, really exploit the algebraic properties of the former projection $\Psi$. 

Neither $\Psi$ nor $\Psi'$ are equivariant for the action by $W$. Using the averaging procedure given by  (\ref{projection A W equivariant}) in the proof of Theorem \ref{projection N and A} (1) it is immediate to check that we obtain from $\Psi$ and $\Psi'$ the two $TA$-equivariant maps 

$$
\widetilde{\Psi}(n)=
\left(
\begin{array}{ccc}
\frac{|xy-z||x||z|^2}{|y|} & 0 & 0\\
0 & \frac{|xy-z||y|^2}{|x|^2|z|} & 0\\
0 & 0 & \frac{|x|}{|xy-z|^2|y||z|}
\end{array}
\right)^{\frac{1}{6}}.
$$
and
$$
\widetilde{\Psi}'(n)=
\left(
\begin{array}{ccc}
\frac{|xy-z|^{\frac{2}{3}}|x|^{\frac{2}{3}}|z|^{\frac{8}{3}}}{|y|^{\frac{4}{3}}} & 0 & 0\\
0 & \frac{|xy-z|^{\frac{2}{3}}|y|^{\frac{8}{3}}}{|x|^{\frac{4}{3}}|z|^{\frac{4}{3}}} & 0\\
0 & 0 & \frac{|x|^{\frac{2}{3}}}{|xy-z|^{\frac{4}{3}}|y|^{\frac{4}{3}}|z|^{\frac{4}{3}}}
\end{array}
\right)^{\frac{1}{6}}
$$
respectively.

\bibliographystyle{alpha}

\end{document}